\documentclass[a4paper,11pt]{amsproc}


\usepackage{enumerate}

\usepackage{xargs, xcolor}
\usepackage{amsmath,amssymb,amsfonts,amsthm}
\usepackage{mathtools, todonotes}

\usepackage[backrefs]{amsrefs}

\usepackage[onehalfspacing]{setspace}

\newcommandx{\set}[2][2=\empty]{\{#1\ifx#2\empty\else\,|\,#2\fi\}}
\newcommand{\norm}[1]{\lvert#1\rvert}
\newcommand{\card}[1]{\left\lvert#1\right\rvert}
\newcommandx{\gensubgrp}[2][2=\empty]{\langle#1\ifx#2\empty\else\,|\,#2\fi\rangle}
\newcommandx{\gennorsubgrp}[2][2=\empty]{\langle\!\langle#1\ifx#2\empty\else\,|\,#2\fi\rangle\!\rangle}
\newcommandx{\gensubsp}[2][2=\empty]{\langle#1\ifx#2\empty\else\,|\,#2\fi\rangle}
\newcommand{\nats}{\mathbb{N}}
\newcommand{\finfield}{\mathbb{F}}
\DeclareMathOperator{\id}{id}
\DeclareMathOperator{\supp}{supp}
\newcommand{\floor}[1]{\left\lfloor#1\right\rfloor}
\newcommand{\ceil}[1]{\left\lceil#1\right\rceil}
\DeclareMathOperator{\fix}{fix}
\newcommand{\C}{\mathbf{C}}
\newcommand{\calU}{\mathcal{U}}
\newcommand{\freegrp}{\mathbf F}
\DeclareMathOperator{\diam}{diam}
\DeclareMathOperator{\ev}{ev}
\DeclareMathOperator{\PSL}{PSL}
\DeclareMathOperator{\cn}{cn}
\DeclareMathOperator{\cd}{cd}

\theoremstyle{plain}
\newtheorem{theorem}{Theorem}
\newtheorem{lemma}{Lemma}
\newtheorem{corollary}{Corollary}

\theoremstyle{definition}
\newtheorem{remark}{Remark}

\title{Word maps with constants on symmetric groups}
\author{Jakob Schneider}
\address{Jakob Schneider, TU Dresden, 01062 Dresden, Germany}
\email{jakob.schneider@tu-dresden.de}
\author{Andreas Thom}
\address{Andreas Thom, TU Dresden, 01062 Dresden, Germany}
\email{andreas.thom@tu-dresden.de}

\begin{document}

\begin{abstract}
	We study word maps with constants on symmetric groups. Even though there are non-trivial mixed identities of bounded length that are valid for all symmetric groups, we show that no such identities can hold in the limit in a metric sense. Moreover, we prove that word maps with constants and non-trivial content, that are short enough, have an image of positive diameter, measured in the normalized Hamming metric, which is bounded from below in terms of the word length. Finally, we also show that every self-map $G\to G$ on a finite non-abelian simple group is actually a word map with constants from $G$.
\end{abstract}

\dedicatory{To Doris, Hannah, Torsten, Jonathan, and Paul who supported me \\ during the long time of my sickness. J.\ Schneider}
\maketitle
\onehalfspacing

\section*{Introduction}

Recently, there has been increasing interest in word maps and laws on finite, algebraic, and topological groups \cites{avnigelanderkassabovshalev2013word,bandmangariongrunewald2012surjectivity,elkasapythom2014goto,garionshalev2009commutator,gordeev1997freedom,gordeevkunyavskiiplotkin2016word,gordeevkunyavskiiplotkin2018word,guralnickliebeckobrienshalevtiep2018surjective,guralnicktiep2015effective,huilarsenshalev2015waring,klyachkothom2017new,larsen2004word,larsenshalev2009word,larsenshalev2016distribution,larsenshalev2017words,larsenshalevtiep2012waring,lubotzky2014images,schneider2019phd,tomanov1985generalized}. Here, every word $w\in\freegrp_r=\gensubgrp{x_1,\ldots,x_r}$ induces a word map $w\colon G^r\to G$ on every group $G$ by substitution, where $\freegrp_r$ denotes the free group in the $r$ generators $x_1,\ldots, x_r$ and we set $w(g_1,\ldots,g_r)$ to be the image of $w$ under the unique homomorphism $\freegrp_r\to G$ which maps $x_i\mapsto g_i$ ($i=1,\ldots,r$). The word $w$ is called an \emph{identity} or a \emph{law} for $G$ iff $w(g_1,\ldots,g_r)=1_G$ for all choices of the $g_i\in G$ ($i=1,\ldots,r$). It is an interesting question to study the length of the shortest non-trivial law of a given finite group $G$. This was done in \cites{thom2017length,bradford2019short}. A bit less restrictively, one can ask when the image of a word map is small in a metric sense, see for example \cites{thom2013convergent, lubotzky2014images, larsen2004word}.

In this article we study word maps with constants. A word with constants in $G$ is an element of the free product $\freegrp_r\ast G$. We get an associated map $G^n\to G$ in a similar way as before by replacing the variables with elements from $G$. The word is called a \emph{mixed identity} or a \emph{law with constants} iff $w(g_1,\ldots,g_r)=1_G$ for all choices of the $g_i\in G$ ($i=1,\ldots,r$). 

Following \cites{klyachkothom2017new,nitschethom2022universal}, for $w\in\freegrp_r\ast G$, we study the \emph{augmentation} $\varepsilon(w)\in\freegrp_r$, which replaces all constants by the neutral element. We call $\varepsilon(w)\in\freegrp_r$ the \emph{content} of $w$. It has been observed in different circumstances \cites{klyachkothom2017new,nitschethom2022universal} that word maps with constants which have a non-trivial content, tend to have a large image. We prove a corresponding result for symmetric groups with respect to the normalized Hamming metric. For $\sigma\in S_n$, we write 
$$
    \fix(\sigma)\coloneqq\set{x}[x.\sigma=x]
$$ 
for the set of \emph{fixed points} of $\sigma$, 
$$
    \supp(\sigma)\coloneqq\set{x}[x.\sigma\neq x]
$$ 
for its \emph{support}, and 
$$
    \norm{\sigma}\coloneqq\card{\supp(\sigma)}
$$ 
for its (unnormalized) \emph{Hamming norm}. The \emph{normalized Hamming norm} on $S_n$ is then defined as $\norm{\sigma}_{\rm H}\coloneqq\frac{1}{n}\norm{\sigma}$. Also, for $\sigma,\tau\in S_n$ let $d(\sigma,\tau)\coloneqq\norm{\sigma^{-1}\tau}$ be their \emph{Hamming distance} and $d_{\rm H}(\sigma,\tau)\coloneqq\frac{1}{n}\norm{\sigma^{-1}\tau}$ be their \emph{normalized Hamming distance}. For $S\subseteq S_n$ write 
$$
    \diam(S)\coloneqq\max_{\sigma,\tau\in S}{d(\sigma,\tau)}
$$ 
for the \emph{diameter} of the set $S$.

We say that $w\in\freegrp_r\ast G$ is \emph{strong} if, when written in normal form, removing the constants does not lead to any cancellation among the variables -- in particular, if $w$ is strong, then either the content is non-trivial or the word was a constant from $G$.

There are some obvious mixed identities for $S_n$. E.g.\ take $\tau$ a transposition, then $[x,\tau]^6$ is such an identity, since $[g,\tau]=\tau^{-g}\tau$ is the product of two transpositions and so either a $3$-cycle, the product of two disjoint transpositions, or trivial, where $g\in S_n$ is arbitrary. We notice here that the word with constants $[x,\tau]^6$ has a trivial content. It is one consequence of our main result, Theorem~\ref{thm:main}, that this must be the case for any mixed identity on $S_n$ of \emph{small} length. However, there are long words with non-trivial content that are an identity for $S_n$, e.g.\ take $x^{n!}$. We show that a non-trivial word map with constants coming from a short word can have a small image (i.e.\ with small diameter) only if there are constants involved with small support. 

Fix a group $G$ of possible constants. Every word $w\in\freegrp_r\ast G$ can be written uniquely in the form
$$
    w=c_0 x_{i(1)}^{\varepsilon(1)} c_1 \cdots c_{l-1} x_{i(l)}^{\varepsilon(l)} c_l\in\freegrp_r\ast G,
$$ 
where $x_1,\ldots,x_r$ are the free generators of $\freegrp_r$, $\varepsilon(j)=\pm1$, $i(j)\in\set{1,\ldots,r}$ ($j=1,\ldots,l$), and $c_j\in G$ ($j=0,\ldots,l$) are such that there is no cancellation, i.e.\ $x_{i(j)}^{\varepsilon(j)}=x_{i(j+1)}^{-\varepsilon(j+1)}$ implies that $c_j\neq 1_G$ for $j=1,\ldots,l-1$. Then, we set $\ell(w)=l$ and call it the \emph{length} of $w$. Note that $\ell$ is just the word length where we give elements from $G$ length zero.

\vspace{0.2cm}
Our main result is the following theorem.

\begin{theorem}\label{thm:main}
    Let $w \in \freegrp_r \ast S_n$ and consider the associated word map with constants $w \colon S_n^r \to S_n$. Then the following holds:
    \begin{enumerate}[(i)]
        \item If $w$ has non-trivial content, then 
        $$
            \frac{\diam(w(S_n^r))+1}{n}\geq \frac{1}{2}\exp(-\log(5\ell(w)) \ell(w)/2).
        $$
        \item If $w$ is strong and $w \notin S_n$, then
        $$
            \frac{\diam(w(S_n^r))+1}{n} \geq\frac{1}{2\ell(w)}.
        $$
    \end{enumerate}
    More precisely, if $w\notin S_n$ is arbitrary and the inequality in $(ii)$ is violated, then a constant of size at most $2(\diam(w(S_n^r))+1)\ell(w)$ can be found, so that the removal of that constant leads to cancellation of variables.
\end{theorem}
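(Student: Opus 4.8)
I will deduce all three assertions from the final ``more precisely'' claim, which I regard as the main technical statement and call $(\ast)$. Granting $(\ast)$, part (ii) is immediate: if $w$ is strong, then no constant can be removed so as to produce a cancellation of variables, so the conclusion of $(\ast)$ is impossible; hence the hypothesis of $(\ast)$ must fail, i.e.\ the inequality in (ii) holds.

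\textbf{Deducing (i).} I argue by induction on $\ell(w)$, writing $D:=\diam(w(S_n^r))+1$. If $D/n\ge 1/(2\ell(w))$ we are done, since $(5\ell(w))^{\ell(w)/2}\ge\ell(w)$ for $\ell(w)\ge 1$. Otherwise $(\ast)$ produces a constant $c_j$ with $1\le j\le\ell(w)-1$, $i(j)=i(j+1)$, $\varepsilon(j)=-\varepsilon(j+1)$ and $\norm{c_j}\le 2D\ell(w)$. Deleting the block $x_{i(j)}^{\varepsilon(j)}c_jx_{i(j)}^{-\varepsilon(j)}$ from $w$ and amalgamating the two neighbouring constants yields a word $w'$ with $\ell(w')\le\ell(w)-2$; as the two variable occurrences are adjacent and mutually inverse in the content word, confluence of free reduction gives $\varepsilon(w')=\varepsilon(w)\neq 1$, so $w'$ again has non-trivial content and, in particular, $\ell(w')\ge 1$. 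Since $w(\vec g)=\gamma\cdot w'(\vec g)$ for a conjugate $\gamma$ of $c_j$ depending on $\vec g$, one has $d(w(\vec g),w'(\vec g))=\norm{c_j}$ for every $\vec g$, hence $\lvert\diam(w(S_n^r))-\diam(w'(S_n^r))\rvert\le 2\norm{c_j}$, so $D':=\diam(w'(S_n^r))+1\le D+2\norm{c_j}\le 5\ell(w)\,D$. Feeding in the induction hypothesis $D'/n\ge\tfrac12(5\ell(w'))^{-\ell(w')/2}$ and using $\ell(w')\le\ell(w)-2$, one gets $D/n\ge\tfrac{1}{5\ell(w)}\cdot\tfrac12(5\ell(w'))^{-\ell(w')/2}\ge\tfrac12(5\ell(w))^{-\ell(w)/2}$, which closes the induction; the base cases ($\ell(w)=1$, and more generally $w$ strong) fall under the first alternative by (ii).

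\textbf{Proof of $(\ast)$.} Let $w\notin S_n$, $l:=\ell(w)$, $D:=\diam(w(S_n^r))+1$. Let $\kappa$ be the least Hamming norm of a cancelling constant of $w$ (a constant $c_j$ with $1\le j\le l-1$, $i(j)=i(j+1)$, $\varepsilon(j)=-\varepsilon(j+1)$; such a constant is automatically non-trivial), with $\kappa=\infty$ if $w$ is strong, and set $k:=\min(\lfloor n/(2l)\rfloor,\kappa)$. The heart of the matter is the bound $\diam(w(S_n^r))\ge k$. Granting it: if $k=\lfloor n/(2l)\rfloor$ then $D\ge\lfloor n/(2l)\rfloor+1\ge n/(2l)$; otherwise $k=\norm{c_{j_0}}$ for a cancelling constant $c_{j_0}$, and then $\norm{c_{j_0}}=k\le\diam(w(S_n^r))=D-1\le 2Dl$, as desired. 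To prove $\diam(w(S_n^r))\ge k$ I route $k$ pairwise disjoint ``trajectories''. Fix distinct start points $p_1,\dots,p_k$; for one trajectory I push $p_t$ through $w=c_0x_{i(1)}^{\varepsilon(1)}c_1\cdots x_{i(l)}^{\varepsilon(l)}c_l$ from left to right, where each constant acts as it must and at each variable slot one is free to declare where the current point goes, recording this as one transition of the permutation $g_{i(j)}$. A trajectory visits at most $2l+O(1)$ points, which I choose pairwise distinct within the trajectory and disjoint across trajectories; then, for each variable, the recorded transitions have pairwise distinct sources and pairwise distinct targets, hence extend to genuine permutations $g_i$. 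The only obstruction is a cancelling sandwich $x_i^{\varepsilon}c_jx_i^{-\varepsilon}$: there $g_i$ is handed the transitions at the trajectory point just before $c_j$ and at its $c_j$-image, and consistency forces the trajectory to collapse through the sandwich unless that point lies in $\supp(c_j)$ -- which can be arranged for all $k$ trajectories at once because $\norm{c_j}\ge\kappa\ge k$. Thus no trajectory collapses, $w(\vec g)$ sends each $p_t$ to a prescribed point different from $p_t$; building a second substitution $\vec g'$ with the same start points but different prescribed endpoints gives $p_tw(\vec g)\ne p_tw(\vec g')$ for all $t$, so $d(w(\vec g),w(\vec g'))\ge k$.

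\textbf{Main obstacle.} The real work is the routing construction: one must (a) choose the trajectories so that the transitions stay globally consistent for a variable occurring at several slots, (b) avoid every collapse at a cancelling sandwich using only the room $\norm{c_j}\ge k$, and (c) keep the number of points used per trajectory small enough that $k\approx n/(2l)$ trajectories fit into $\{1,\dots,n\}$ and the constants $1/(2l)$ and $2D\ell(w)$ actually emerge, while arranging $\vec g'$ (or, after controlling $\norm{c_0c_1\cdots c_l}$, the trivial substitution) so that its image provably differs from that of $w(\vec g)$ at every $p_t$. Making this bookkeeping yield the stated constants is where the difficulty lies.
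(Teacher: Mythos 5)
Your reduction of parts (i) and (ii) to the final ``more precisely'' statement $(\ast)$ is correct and is essentially the route the paper takes: the paper's Lemma~\ref{lem:no_lw_lrg_supp_consts} is packaged into the inequality $\norm{w}_{\rm crit}\leq 2(\diam(w(S_n^r))+1)\ell(w)$ with $\norm{w}_{\rm crit}=\min\{n,\min_{j\in J_-(w)}\norm{c_j}\}$, which is exactly $(\ast)$, and (i) is then obtained by iterating elementary reductions with the same bookkeeping you use: content is preserved, $\ell$ drops by at least $2$ per step, the diameter changes by at most $2\norm{c_j}$, giving a factor $(1+4\ell(w))\leq 5\ell(w)$ per step and hence the $(5\ell(w))^{\ell(w)/2}$ loss. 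So the outer layer of your argument matches the paper.

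The gap is in your proof of $(\ast)$. The intermediate claim $\diam(w(S_n^r))\geq k$ with $k=\min\bigl(\floor{n/(2l)},\kappa\bigr)$ and $\kappa=\min_{j\in J_-(w)}\norm{c_j}$ is false. Take $w=[x,\tau]^6$ with $\tau$ a transposition (the paper's own introductory example): here $\kappa=2$ and $l=12$, so $k=2$ once $n\geq 48$, yet $w$ is a mixed identity and its image has diameter $0$. The failure is located exactly at the step ``which can be arranged for all $k$ trajectories at once because $\norm{c_j}\geq\kappa\geq k$'': at a cancelling sandwich $x_i^{\varepsilon}c_jx_i^{-\varepsilon}$ the routed point must not only lie in $\supp(c_j)$, it must also avoid the $\varepsilon$-targets of all $x_i$-edges already laid down and the $c_j$-preimages of their $\varepsilon$-sources --- up to roughly $2k\norm{w}_i$ points, which may all sit inside $\supp(c_j)$ (you cannot keep earlier visits out of $\supp(c_j)$, since the other constants move points in ways you do not control, and other critical constants force you into their supports). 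The correct hypothesis is therefore of the form $\norm{c_j}\gtrsim 2k\norm{w}_\infty$ rather than $\norm{c_j}\geq k$; this is precisely what conditions (i)--(iv) of the paper's Lemma~\ref{lem:no_lw_lrg_supp_consts} and its partial-Schreier-graph counting encode, and it yields only $\diam(w(S_n^r))\geq\floor{\norm{w}_{\rm crit}/(2\norm{w}_\infty)}$. That weaker bound still implies $(\ast)$, so your deductions of (i) and (ii) survive unchanged, but the routing construction has to be redone with this bookkeeping (and without assuming all visited points can be kept pairwise distinct), which is exactly the part you deferred as the ``main obstacle'' and which constitutes the actual technical content of the paper's proof.
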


\begin{corollary}\label{cor:dense_diam}	
    Assume that $w\in\freegrp_r\ast S_n$ ($n\geq 2$) is strong and induces a constant map $S_n^r\to S_n$. Then either $w\in S_n$ or $\ell(w) \geq \frac{n}{2}$.
\end{corollary}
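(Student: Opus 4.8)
The plan is to read this off directly from Theorem~\ref{thm:main}(ii). If $w$ induces a constant map $S_n^r\to S_n$, then its image $w(S_n^r)$ is a singleton, hence $\diam(w(S_n^r))=0$. Assume now that $w\notin S_n$. Since $w$ is strong by hypothesis, part~(ii) of Theorem~\ref{thm:main} applies and gives
$$
    \frac{1}{n}=\frac{\diam(w(S_n^r))+1}{n}\geq\frac{1}{2\ell(w)},
$$
and rearranging yields $\ell(w)\geq n/2$, which is the claim.

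One can equivalently phrase the argument as a proof by contradiction using the final ``more precisely'' assertion of Theorem~\ref{thm:main}: suppose $w\notin S_n$ is strong with $\ell(w)<n/2$. Then, since $\diam(w(S_n^r))=0$, the inequality of part~(ii) fails, so a constant of size at most $2(\diam(w(S_n^r))+1)\ell(w)=2\ell(w)<n$ exists whose removal produces a cancellation among the variables. This contradicts strongness of $w$, and hence $\ell(w)\geq n/2$.

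There is essentially no obstacle beyond Theorem~\ref{thm:main} itself; the only things to check are the trivial facts that a constant map has image of zero diameter and that ``$w$ strong and $w\notin S_n$'' is precisely the hypothesis needed to invoke part~(ii). (Note that part~(i) alone would be far too weak here, producing only a bound of the form $\ell(w)\log(5\ell(w))\geq 2\log(n/2)$, i.e.\ roughly of order $\log n/\log\log n$, rather than the linear bound $\ell(w)\geq n/2$.)
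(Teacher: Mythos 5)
Your proof is correct and is exactly how the paper derives the corollary: apply Theorem~\ref{thm:main}(ii) with $\diam(w(S_n^r))=0$ and rearrange $1/n\geq 1/(2\ell(w))$ to get $\ell(w)\geq n/2$. The alternative phrasing via the ``more precisely'' clause is also fine, so there is nothing to add.
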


The shortest known strong mixed identities are the laws constructed in \cite{kozmathom2016divisibility}, they are of length $\exp(C \log(n)^4 \log\log(n)).$ It is an interesting open problem to close this gap.

The second part of Theorem \ref{thm:main} implies the following corollary which is of independent interest.

\begin{corollary} \label{cor:ultra}
    A metric ultraproduct of symmetric groups $S_n$ equipped with the normalized Hamming metric, with $n$ tending to infinity along the chosen ultrafilter, does not satisfy any non-trivial mixed identity.
\end{corollary}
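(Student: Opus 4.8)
The plan is to derive this from part~(ii) of Theorem~\ref{thm:main} by comparing a hypothetical mixed identity on the ultraproduct with its coordinate-wise incarnations on the factors $S_n$. Fix the chosen ultrafilter $\omega$ (so $\lim_{n\to\omega} n=\infty$) and write $\mathcal{G}=\prod_{n\to\omega} S_n$ for the metric ultraproduct, with quotient homomorphism $\pi\colon\prod_n S_n\to\mathcal{G}$; recall that $d_{\rm H}(\pi((\sigma_n)_n),\pi((\tau_n)_n))=\lim_{n\to\omega} d_{\rm H}(\sigma_n,\tau_n)$. Suppose, for contradiction, that $w\in\freegrp_r\ast\mathcal{G}$ is a non-trivial mixed identity, and write it in normal form $w=c_0 x_{i(1)}^{\varepsilon(1)} c_1\cdots c_{l-1} x_{i(l)}^{\varepsilon(l)} c_l$ with $c_j\in\mathcal{G}$ and $l=\ell(w)$. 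First I would observe that $l\geq 1$: if $l=0$ then $w$ is a non-trivial element of $\mathcal{G}$ and induces the non-trivial constant map, which is not a mixed identity. Then I would pick sequences $(c_{j,n})_n\in\prod_n S_n$ with $\pi((c_{j,n})_n)=c_j$ and let $w_n\in\freegrp_r\ast S_n$ be the word obtained from $w$ by replacing each $c_j$ with $c_{j,n}$, so that $\pi((w_n(\vec\sigma_n))_n)=w(g_1,\dots,g_r)$ whenever $\vec\sigma_n=(\sigma_{1,n},\dots,\sigma_{r,n})\in S_n^r$ and $g_i=\pi((\sigma_{i,n})_n)$.

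The key step, and the one I expect to be the main obstacle, is to show that $\lim_{n\to\omega}\frac{\diam(w_n(S_n^r))+1}{n}=0$. I would argue by contradiction: if this fails, there are $\eta>0$ and a set $A\in\omega$ with $\diam(w_n(S_n^r))\geq\eta n$ for $n\in A$; choosing, for $n\in A$, tuples $\vec\sigma_n,\vec\tau_n\in S_n^r$ with $d(w_n(\vec\sigma_n),w_n(\vec\tau_n))\geq\eta n$ (and arbitrary tuples for $n\notin A$) and forming $g_i=\pi((\sigma_{i,n})_n)$, $h_i=\pi((\tau_{i,n})_n)$, the compatibility above gives $d_{\rm H}(w(g_1,\dots,g_r),w(h_1,\dots,h_r))=\lim_{n\to\omega} d_{\rm H}(w_n(\vec\sigma_n),w_n(\vec\tau_n))\geq\eta>0$, contradicting that $w$ is a mixed identity (whence both values equal $1_{\mathcal{G}}$). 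The essential point is that a large diameter is always witnessed by concrete tuples, which lift to the ultraproduct.

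It then remains to feed this into Theorem~\ref{thm:main}. Call an index $j\in\{1,\dots,l-1\}$ an \emph{inverse-pair position} if $x_{i(j)}^{\varepsilon(j)}=x_{i(j+1)}^{-\varepsilon(j+1)}$; uniqueness of the normal form forces $c_j\neq 1_{\mathcal{G}}$ at every such $j$, hence $c_{j,n}\neq 1$ in $S_n$ for $\omega$-many $n$. Therefore, for $\omega$-many $n$ we have $\ell(w_n)=l$, $w_n\notin S_n$, and (by the previous step) $\frac{\diam(w_n(S_n^r))+1}{n}<\frac{1}{2\ell(w_n)}$, i.e.\ the inequality in Theorem~\ref{thm:main}(ii) is violated for $w_n$. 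If $w$ is strong, then each such $w_n$ is strong with $w_n\notin S_n$, contradicting Theorem~\ref{thm:main}(ii) directly. Otherwise I would invoke the ``more precisely'' part of Theorem~\ref{thm:main}: for each such $n$ it provides a constant $c_{j(n),n}$ with $\norm{c_{j(n),n}}\leq 2(\diam(w_n(S_n^r))+1)\,l$ whose removal from $w_n$ produces a cancellation among the variables; since removing $c_{0,n}$ or $c_{l,n}$ cannot create such a cancellation and cancelling neighbouring variables must be inverse to one another, $j(n)$ is necessarily an inverse-pair position. As there are only finitely many inverse-pair positions, some fixed $j^\ast$ occurs as $j(n)$ for $\omega$-many $n$, and for those $n$ we get $\norm{c_{j^\ast,n}}_{\rm H}\leq 2l\cdot\frac{\diam(w_n(S_n^r))+1}{n}\to 0$ along $\omega$, so that $c_{j^\ast}=\pi((c_{j^\ast,n})_n)=1_{\mathcal{G}}$ --- contradicting that $c_{j^\ast}\neq 1_{\mathcal{G}}$. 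Apart from the diameter-transfer step, the only delicate point is this bookkeeping: keeping $\ell(w_n)=l$ and pinning down a single offending constant along $\omega$.
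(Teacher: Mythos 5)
Your proof is correct and follows essentially the same route as the paper: transfer a hypothetical mixed identity on the ultraproduct to words $w_n\in\freegrp_r\ast S_n$, show $\diam(w_n(S_n^r))/n\to_\calU 0$, and use the quantitative bound from Theorem~\ref{thm:main} (equivalently $\norm{w}_{\rm crit}\leq 2(\diam(w(S_n^r))+1)\ell(w)$) together with a pigeonhole over the finitely many critical indices to force some critical constant to be a null sequence, contradicting the reduced normal form. You merely spell out the lifting/diameter-transfer and bookkeeping steps that the paper leaves implicit.
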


For completeness, let us briefly recall the notion of the \emph{metric ultraproduct} of a family of metric groups $(G_n,d_n)_{n\in\nats}$ with respect to an \emph{ultrafilter} $\calU$ on $\nats$. Here $d_n$ is a bi-invariant metric on $G_n$ ($n\in\nats$). 
It is defined as the quotient $\prod_{n\in\nats}{G_n}\big/N_\calU$, where $N_\calU\coloneqq\set{(g_n)_{n\in\nats}}[d_n(g_n,1_{G_n})\to_\calU 0]$ is the normal subgroup of null sequences, and is itself a metric group when equipped with the limit metric.

For more on metric ultraproducts of symmetric groups, see \cites{elekszabo2003sofic,pestov2008hyperlinear,nikolovschneiderthom2018some}. Note that the previous corollary is in contrast to the usual ultraproduct of symmetric groups, which satisfies a mixed identity $[x,\tau]^6$, where $\tau$ is an ultraproduct of transpositions as discussed above.

\vspace{0.2cm}
Despite the restrictions on images of word maps with constants given by the length of the words and the structure of the constants involved, we prove in the Appendix that every self-map of a non-abelian simple group is a word map with constants.

\section{Basic definitions}

In this short section, we make some basic definitions, which are needed later. Let $G$ be a group, the group of possible constants. As above, write the fixed word $w\in\freegrp_r\ast G$ uniquely in the form
$$
    w=c_0 x_{i(1)}^{\varepsilon(1)} c_1 \cdots c_{l-1} x_{i(l)}^{\varepsilon(l)} c_l\in\freegrp_r\ast G,
$$ 
where $\varepsilon(j)=\pm1$, $i(j)\in\set{1,\ldots,r}$ ($j=1,\ldots,l$), and $c_j\in G$ ($j=0,\ldots,l$) are such that there is no cancellation. 

We define the sets of indices $J_0(w),J_+(w),J_-(w)\subseteq\set{1,\ldots,l-1}$ by $$J_0(w)\coloneqq\set{j}[i(j)\neq i(j+1)],$$ $$J_+(w)\coloneqq\set{j}[i(j)=i(j+1) \mbox{ and }\varepsilon(j)=\varepsilon(j+1)],$$ and $$J_-(w)\coloneqq\set{j}[i(j)=i(j+1)\mbox{ and }\varepsilon(j)=-\varepsilon(j+1)].$$ Note that $J_0(w),J_+(w),J_-(w)$ partition the set $\set{1,\ldots,l-1}$.
We call $J_-(w)$ the set of \emph{critical indices} of $w$ since the removal of a constant $c_j$ for $j\in J_-(w)$ leads to cancellation among the variables. The constant $c_j$ is then called a \emph{critical constant}. We call $v$ an \emph{elementary reduction} of $w\in\freegrp_r\ast G$, if it is obtained from $w$ by deleting a critical constant and reducing the outcome.

Now, we focus on word maps with constants in symmetric groups. Keep the notation from above and set $G\coloneqq S_n$. For simplicity, we assume that $c_0=1_G=\id$. Keep the definition of the sets $J_-(w)$, $J_+(w)$, and $J_0(w)$. Define 
$$
    w_j\coloneqq x_{i(1)}^{\varepsilon(1)}c_1\cdots x_{i(j)}^{\varepsilon(j)}c_j
$$ 
to be the $j$th prefix of $w$ ($j=0,\ldots,l$). Note that $w_l=w$ by construction. For a reduced word $v\in \freegrp_r\ast S_n$, write $\norm{v}_i$ for its \emph{$i$-length}, i.e.\ the number of occurrences of $x_i$ and $x_i^{-1}$ in its reduced representation ($i=1,\ldots,r$). Let $w(S_n^r)\subseteq S_n$ denote the \emph{image} of the \emph{word map} $w\colon S_n^r\to S_n$ induced by $w$. 

\section{Proof of the main result}

We are now ready to state the main technical lemma:

\begin{lemma}\label{lem:no_lw_lrg_supp_consts}
	In this setting, assume that $l>0$ and let $d\geq 1$ be an integer such that the following hold:
	\begin{enumerate}[\normalfont(i)]		
		\item $n\geq (d-1)(\norm{w}_{i(j)}+\norm{w}_{i(j+1)})+\norm{w_j}_{i(j)}+\norm{w_{j+1}}_{i(j+1)}-1$  for all $j\in J_0(w)$;	
		\item $n\geq 2((d-1)\norm{w}_{i(j)}+\norm{w_{j+1}}_{i(j)}-1)$ for $j\in J_+(w)$;
		\item $\norm{c_j}\geq2((d-1)\norm{w}_{i(j)}+\norm{w_{j+1}}_{i(j)})-3$ for all $j\in J_-(w)$;
		\item $n\geq d\norm{w}_{i(l)}+1$.
	\end{enumerate}
	Then, we have that $\diam(w(S_n^r))\geq d$.
\end{lemma}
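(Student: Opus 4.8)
The plan is to construct, for the given word $w$ and integer $d$, specific tuples $(g_1,\dots,g_r)\in S_n^r$ whose images $w(g_1,\dots,g_r)$ realize two permutations at Hamming distance $\geq d$; since $\diam$ is a max over all pairs, exhibiting one such pair suffices. The natural candidates are: evaluate $w$ at the trivial tuple (or some fixed baseline), and evaluate $w$ at a carefully chosen tuple that moves many points. Because $w_l = w$ and $d(\mathrm{id}, \sigma) = \norm{\sigma}$, it is in fact cleanest to aim for a single evaluation $\sigma = w(g_1,\dots,g_r)$ with $\norm{\sigma}\geq d$, comparing against the identity — but the identity need not lie in the image, so more likely one compares $w(g_1,\dots,g_r)$ with $w(g_1',\dots,g_r')$ where the two tuples agree except in a controlled way, forcing $w$ to move a chosen point $x_0$ to $d$ distinct locations as we vary the data, or rather forcing the \emph{prefixes} $w_j$ to trace out a long injective trajectory.

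The key mechanism I would use is a \emph{prefix-trajectory} argument. Pick a point $x_0$ and try to choose the generators $g_i$ so that the partial evaluations $x_0.w_0, x_0.w_1, \dots, x_0.w_l$ are, as far as possible, all distinct — and moreover so that $x_0.w_l$ differs from the corresponding trajectory point of a second evaluation by at least $d$ steps. Concretely, I would build the $g_i$ greedily along $j = 1,\dots,l$: at step $j$ we have determined where $x_0.w_{j-1}$ sits, we must then specify the action of $x_{i(j)}^{\varepsilon(j)}$ on that point (consistently with choices already forced for $x_{i(j)}$ at earlier occurrences) and then apply the known constant $c_j$. The counting hypotheses (i)–(iv) are exactly the bookkeeping that guarantees enough "fresh" points remain available at each step: $\norm{w}_{i(j)}$ bounds how many times $x_{i(j)}$ is used in total (hence how many images of it are already pinned down), $\norm{w_j}_{i(j)}$ and $\norm{w_{j+1}}_{i(j+1)}$ count the usages up to the current position, the factor $(d-1)$ accounts for running the construction through roughly $d$ "laps" or for $d$ parallel copies of the trajectory, and the three cases $J_0, J_+, J_-$ correspond to whether consecutive variable-slots involve different variables, the same variable with the same sign, or the same variable with opposite sign. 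In the last case there is no free choice — the constant $c_j$ is fixed — so instead we need $\norm{c_j}$ itself to be large (hypothesis (iii)), which is why a critical constant with small support would break the argument; that is precisely the "constant of size at most $\dots$" escape clause in Theorem~\ref{thm:main}(ii).

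More precisely I would fix $d$ disjoint "tracks" $x_0^{(1)},\dots,x_0^{(d)}$ (or equivalently run one track and ask that $x_0.w$ land in a prescribed orbit of size $\geq d$ of some auxiliary permutation, built into one of the $g_i$), define the $g_i$ on the union of tracks by the greedy rule above — defining each $g_i$ partially as an injective partial map and extending arbitrarily to a permutation at the end, legal because hypotheses (i)–(iv) keep the partial map from over-committing — and then verify that $w(g_1,\dots,g_r)$ moves each track-origin $x_0^{(k)}$ off itself, indeed to the $k$-th point of a $d$-cycle, while a second tuple obtained by a single local modification fixes them; the difference permutation then has support $\geq d$. The main obstacle, and the part deserving the most care, is the consistency of the greedy assignment when a variable $x_i$ occurs many times: the images we want to assign at its various occurrences must not collide and must not contradict each other, and the inequalities have to be shown tight enough to survive running the prefix $w_j$ and the suffix simultaneously (this is why (i) sums the $i(j)$ and $i(j+1)$ contributions, and why (iv) treats the final slot $x_{i(l)}$ separately, with no trailing constant to absorb the collision). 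I would organize this as an explicit inductive construction of the partial permutations with a loop invariant: "after processing $w_j$, each track-point's position is known, all positions on all tracks are distinct, and the number of committed values of each $x_i$ is at most (its total $i$-length)$\times d$." Establishing that invariant against the four hypotheses is the technical heart; everything after it is routine.
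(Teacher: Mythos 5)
Your core machinery is the same as the paper's: fix $d$ marked points, run a greedy construction along the prefixes $w_0,\dots,w_l$, committing values of each variable only as partial injective maps (a partial Schreier graph), with hypotheses (i)--(iv) serving exactly as the count of already-forbidden targets in the four cases $J_0$, $J_+$, $J_-$ and $j=l$, and extending to genuine permutations at the end. That part of your plan is sound and is essentially the paper's proof.

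The genuine gap is in how you produce the \emph{second} element of the image. In your final paragraph you want a second tuple, obtained by a local modification, on which $w$ \emph{fixes} all $d$ track origins, so that the ``difference permutation'' has support at least $d$. Such a tuple need not exist: take $w=x^{-1}cx$ with $c$ an $n$-cycle; then every value of $w$ is an $n$-cycle, hence fixed-point-free, so no evaluation of $w$ fixes even one point, let alone your $d$ chosen ones. (Comparing with the identity fails for the same reason, as you noted yourself.) The repair --- and what the paper does --- is to fix an \emph{arbitrary} $\tau\in w(S_n^r)$ at the outset and to build $\sigma=w(\pi_1,\dots,\pi_r)$ greedily so that the endpoint of the $k$-th trajectory merely \emph{differs} from $\omega_k.\tau$; this adds exactly one forbidden vertex per track, and only at the last letter $j=l$, which is precisely why hypothesis (iv) reads $n\geq d\norm{w}_{i(l)}+1$ rather than $n\geq d\norm{w}_{i(l)}$. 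Your earlier sentence (``so that $x_0.w_l$ differs from the corresponding trajectory point of a second evaluation'') gestures at this correct version, but the argument you then commit to (forcing fixed points, or landing on ``the $k$-th point of a $d$-cycle'') is both unnecessary and, in general, impossible; without replacing it by the avoid-$\tau$ constraint, the proof does not close.
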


To prove this lemma and the results thereafter, we need some additional graph-theoretical terminology: By a \emph{directed graph} $G$, we mean an object that consists of a set of \emph{vertices} $V(G)$ and a set of \emph{edges} $E(G)$ together with maps $\alpha_{+1},\alpha_{-1}\colon E(G)\to V(G)$. For an edge $e$ of $G$, we call $\alpha_{+1}(e)$ its \emph{source} and $\alpha_{-1}(e)$ its \emph{target}; also for $\varepsilon\in\set{\pm1}$, we call $\alpha_\varepsilon(e)$ the \emph{$\varepsilon$-source} and $\alpha_{-\varepsilon}(e)$ the \emph{$\varepsilon$-target} of $e$. Let $S$ be a set of \emph{labels}. By an edge-labeling of $G$ by $S$ we mean a mapping $\lambda\colon E(G)\to S$. If $G$ is equipped with such a labeling, we call it a \emph{directed $S$-edge-labeled graph}. In this case, an edge $e$ of $G$ which is labeled by $s\in S$ is called an \emph{$s$-arrow}. In this situation, if we exhibit a particular vertex $v$ of $G$, we call $(G,v)$ a \emph{pointed directed $S$-edge-labeled graph}.

We will also make use of the following notation for partially defined maps: If $a\colon A\to B$ and $b\colon B\to C$ are such maps, then their \emph{composition} $ab\colon A\to C$ is the partially defined map given by $x.ab=(x.a).b$ if $x.a$ exists and $(x.a).b$ exists, and is undefined otherwise.

\vspace{0.2cm}
Here comes the proof of the previous lemma:

\begin{proof}[Proof of Lemma~\ref{lem:no_lw_lrg_supp_consts}]
	We have to prove that, under the assumptions of the lemma, there exist two elements $\sigma,\tau\in w(S_n^r)$ such that $d(\sigma,\tau)\geq d$. So let $\tau\in w(S_n^r)$ be an arbitrary element. In the following, we construct $\sigma\in w(S_n^r)$ such that $d(\sigma,\tau)\geq d$:
	
	Let $S_n$ act on the $n$-element set $\Omega$ and fix $d$ arbitrary distinct points $\omega_1,\ldots,\omega_d\in\Omega$ (this is certainly possible by condition (iv) in the lemma, since $n\geq d\norm{w}_{i(l)}+1>d$ as $l\geq 1$). Now we inductively construct a family $(G_k^j,\omega_k^j)_{j,k}$ of pointed directed $S$-edge-labeled graphs on the fixed vertex set $V(G_k^j)\coloneqq\Omega$ where $S=\set{x_1,\ldots,x_r}$ is the set of generators of $\freegrp_r$ ($j=0,\ldots,l$, $k=1,\ldots,d$). The graphs $G_k^j$ will be constructed in the following lexicographic order:
	$$G_1^0,G_1^1,G_1^2,\ldots,G_1^l=G_2^0,G_2^1,G_2^2\ldots,G_2^l=G_3^0,G_3^1,G_3^2,\ldots\ldots,G_d^l.$$ 
	Each such graph $G_k^j$ will be a \emph{partial Schreier graph} of $\freegrp_r$ (i.e.\ a graph that can be completed to a Schreier graph of $\freegrp_r$) which is obtained from its predecessor by adding a single $x_{i(j)}$-arrow $e_k^j$, starting from the empty graph $G_1^0$, i.e.\ the graph with $E(G_1^0)=\emptyset$. Therefore, there will be precisely $((k-1)\norm{w}_{i}+\norm{w_j}_{i})$-many $x_{i}$-arrows in $G_k^j$ which encode partial injective maps $\pi_{k,i}^j\colon\Omega\to\Omega$ ($i=1,\ldots,r$). With this notation, the points $\omega_k^j$ will be chosen in such a way that 
	$$
	    \omega_k^j=\omega_k.w_j(\pi_{k,1}^j,\ldots,\pi_{k,r}^j)\quad\text{and}\quad\omega_k.w_l(\pi_{k,1}^l,\ldots,\pi_{k,r}^l)\neq\omega_k.\tau
	$$ 
	for $j=0,\ldots,l$ and $k=1,\ldots,d$, i.e.\ if $j\geq 1$ we have 
	$$
	    \alpha_{\varepsilon(j)}(e_k^j)=\omega_k^{j-1}\quad\text{and}\quad \alpha_{-\varepsilon(j)}(e_k^j).c_j=\omega_k^j.
	$$ 
	Hence, setting $\pi_1,\ldots,\pi_r\in S_n$ to be extensions of $\pi_{d,1}^l,\ldots,\pi_{d,r}^l$, the points $\omega_k^j$ ($j=0,\ldots,l$) will be precisely the \emph{trajectory} of $\omega_k$ under the prefixes of $w(\pi_1,\ldots,\pi_r)$. Thus, setting $\sigma\coloneqq w(\pi_1,\ldots,\pi_r)$, we will have that $\omega_k.\sigma\neq\omega_k.\tau$ for all $k=1,\ldots,d$, so that $d(\sigma,\tau)\geq d$ as desired.
	
	We are left to carry out the construction of the family $(G_k^j,\omega_k^j)_{j,k}$ of pointed $S$-edge-labeled graphs. Recall that we start with $G_1^0$ being the empty graph, hence $E(G_1^0)=\emptyset$ and $\omega_1^0\coloneqq\omega_1,\ldots,\omega_d^0\coloneqq\omega_d$. Assume that we are to construct $(G_k^j,\omega_k^j)$ out of the previous data. If $j=0$, there is nothing to do. So assume $j\geq 1$ and we are to add the $x_{i(j)}$-arrow $e^j_k.$ We assume by induction that we are already given an admissible `starting point' $\alpha_{\varepsilon(j)}(e_k^j)=\omega_k^{j-1}$ of our edge $e_k^j$, i.e.\ $\omega_k^{j-1}\neq\alpha_{\varepsilon(j)}(e)$ for any $x_{i(j)}$-arrow $e\in E(G_k^{j-1})$. Our task is to find an admissible `end point' $\alpha_{-\varepsilon(j)}(e_k^j)$ of $e_k^j$. This means, we have to ensure that the following conditions are satisfied:	
	\begin{enumerate}[(a)]
		\item $\alpha_{-\varepsilon(j)}(e_k^j)\neq\alpha_{-\varepsilon(j)}(e)$ for any $x_{i(j)}$-arrow $e\in E(G_k^{j-1})$;
		\item if $i(j)=i(1)$ and $\varepsilon(j)=-\varepsilon(1)$, then $\alpha_{-\varepsilon(j)}(e_k^j)\neq\omega_m$ for all $m=k+1,\ldots,d$;
		\item if $j<l$, then $\omega_k^j\coloneqq\alpha_{-\varepsilon(j)}(e_k^j).c_j\neq\alpha_{\varepsilon(j+1)}(e)$ for any $x_{i(j+1)}$-arrow $$e\in E(G_k^j)\coloneqq E(G_k^{j-1})\cup\set{e_k^j};$$
		\item if $j=l$, then $\omega_k^j\coloneqq\alpha_{-\varepsilon(j)}(e_k^j).c_j\neq\omega_k.\tau$.
	\end{enumerate}
	Let us briefly explain this: (a) means that $e_k^j$ will not have the same $\varepsilon(j)$-target as any $x_{i(j)}$-arrow in $G_k^{j-1}$, ensuring that $G_k^j$ remains a partial Schreier graph. (b) is necessary, since the $\varepsilon(1)$-sources of the $x_{i(1)}$-arrows $e_m^1$ are already fixed to be $\omega_m$ from the beginning (for all $m=1,\ldots,d$), so these cannot be used by another $x_{i(1)}$-arrow as its $\varepsilon(1)$-source. (c) is necessary to ensure that $e_k^{j+1}$ will have a valid `starting point' $\omega_k^j=\alpha_{\varepsilon(j+1)}(e_k^{j+1})$ (when $j<l$) which is not already in use as the $\varepsilon(j+1)$-source of another $x_{i(j+1)}$-arrow in $G_k^j$. Finally, (d) ensures that $\omega_k.\sigma\neq\omega_k.\tau$.
	
	Now we count the number of possibilities to choose the $\varepsilon(j)$-target $\alpha_{-\varepsilon(j)}(e_k^j)$ of $e_k^j$ according to (a)--(d): As $G_k^{j-1}$ is a partial Schreier graph, no two of its $x_{i(j)}$-arrows have the same $\varepsilon(j)$-target. Hence there are precisely 
	$$
	n-((k-1)\norm{w}_{i(j)}+\norm{w_{j-1}}_{i(j)})=n-((k-1)\norm{w}_{i(j)}+\norm{w_j}_{i(j)}-1)
	$$ 
	possible $\varepsilon(j)$-targets for $e_k^j$ that satisfy (a). In the following three cases, we assume that $j<l$, so that (d) is irrelevant.
	\vspace{1ex}
	
	\emph{Case~(i): $i(j)\neq i(j+1)$.} This means that $j\in J_0(w)$. Then $G_k^j$ has precisely 
	$$
	(k-1)\norm{w}_{i(j+1)}+\norm{w_j}_{i(j+1)}=(k-1)\norm{w}_{i(j+1)}+\norm{w_{j+1}}_{i(j+1)}-1
	$$
	$x_{i(j+1)}$-arrows, none of which is $e_k^j$, so (c) rules out as many vertices for the $\varepsilon(j)$-target of $e_k^j$. In the worst case, the condition in (b) is satisfied, so that (b) rules out at most $d-k$ more vertices. In total, we get at least
	\begin{eqnarray*}
		&&n-((k-1)(\norm{w}_{i(j)}+\norm{w}_{i(j+1)})+\norm{w_j}_{i(j)}+\norm{w_{j+1}}_{i(j+1)}-2+d-k)\\
		&=&
		n-((k-1)(\norm{w}_{i(j)}+\norm{w}_{i(j+1)}-1)+\norm{w_j}_{i(j)}+\norm{w_{j+1}}_{i(j+1)}+d-3)\\
		&\geq&
		n-((d-1)(\norm{w}_{i(j)}+\norm{w}_{i(j+1)}-1)+\norm{w_j}_{i(j)}+\norm{w_{j+1}}_{i(j+1)}+d-3)\\
		&=&
		n-((d-1)(\norm{w}_{i(j)}+\norm{w}_{i(j+1)})+\norm{w_j}_{i(j)}+\norm{w_{j+1}}_{i(j+1)}-2)
	\end{eqnarray*}
	possible choices for the $\varepsilon(j)$-target of $e_k^j$, which is a positive number by assumption (i) in the lemma.
	\vspace{1ex}
	
	\emph{Case (ii): $i(j)=i(j+1)$ and $\varepsilon(j)=\varepsilon(j+1)$.} This means that $j\in J_+(w)$. Considering (c), there are precisely 
	$$
	(k-1)\norm{w}_{i(j)}+\norm{w_{j-1}}_{i(j)}=(k-1)\norm{w}_{i(j)}+\norm{w_{j+1}}_{i(j)}-2
	$$
	$x_{i(j+1)}=x_{i(j)}$-arrows $e$ different from $e_k^j$ in $G_k^j$ (namely the ones in $G_k^{j-1}$) which admit as many vertices as their $\varepsilon(j+1)=\varepsilon(j)$-sources, and the condition $\alpha_{-\varepsilon(j)}(e_k^j).c_j\neq\alpha_{\varepsilon(j)}(e_k^j)$ rules out the vertex $\alpha_{\varepsilon(j)}(e_k^j).c_j^{-1}=\omega_k^{j-1}.c_j^{-1}$. In the worst case, the condition in (b) is satisfied, so that (b) rules out at most $d-k$ further vertices. In total, we get at least
	\begin{eqnarray*}
		&&n-(2((k-1)\norm{w}_{i(j)}+\norm{w_{j+1}}_{i(j)}-2)+1+d-k)\\
		&=&
		n-((k-1)(2\norm{w}_{i(j)}-1)+2\norm{w_{j+1}}_{i(j)}-4+d)\\
		&\geq&
		n-((d-1)(2\norm{w}_{i(j)}-1)+2\norm{w_{j+1}}_{i(j)}-4+d)\\
		&=&
		n-(2((d-1)\norm{w}_{i(j)}+\norm{w_{j+1}}_{i(j)})-3)
	\end{eqnarray*}
	possible choices for $\alpha_{-\varepsilon(j)}(e_k^j)$, which is positive by assumption~(ii) in the lemma.
	\vspace{1ex}
	
	\emph{Case~(iii): $i(j)=i(j+1)$ and $\varepsilon(j)=-\varepsilon(j+1)$.} This means that $j\in J_-(w)$. As in the previous case, (c) splits into two parts: The $x_{i(j+1)}=x_{i(j)}$-arrows different from $e_k^j$ in $G_k^j$ rule out 
	$$
	(k-1)\norm{w}_{i(j)}+\norm{w_{j-1}}_{i(j)}=(k-1)\norm{w}_{i(j)}+\norm{w_{j+1}}_{i(j)}-2
	$$
	vertices for the $\varepsilon(j)$-target of $e_k^j$, and the condition $\alpha_{-\varepsilon(j)}(e_k^j).c_j\neq\alpha_{-\varepsilon(j)}(e_k^j)$ rules out the $n-\norm{c_j}$ fixed points of $c_j$. In the worst case, the condition in (b) is satisfied, and (b) rules out $d-k$ further vertices. Hence we have at least
	\begin{eqnarray*}
		&& n-(2((k-1)\norm{w}_{i(j)}+\norm{w_{j+1}}_{i(j)}-2)+n-\norm{c_j}+d-k)\\
		&=&
		\norm{c_j}-((k-1)(2\norm{w}_{i(j)}-1)+2\norm{w_{j+1}}_{i(j)}-5+d)\\
		&\geq&
		\norm{c_j}-((d-1)(2\norm{w}_{i(j)}-1)+2\norm{w_{j+1}}_{i(j)}-5+d)\\
		&=&
		\norm{c_j}-(2((d-1)\norm{w}_{i(j)}+\norm{w_{j+1}}_{i(j)})-4)
	\end{eqnarray*}
	choices for $\alpha_{-\varepsilon_j}(e_k^j)$, which is positive by assumption~(iii) in the lemma.
	\vspace{1ex}
	
	\emph{Case~(iv): $j=l$.} Then (c) is irrelevant, (b) can rule out at most $d-k$ vertices for $\alpha_{-\varepsilon_l}(e_k^l)$ if ($\ast$) holds, and (d) rules out one further vertex. Hence in total we have at least
	$$
	n-((k-1)\norm{w}_{i(l)}+\norm{w_l}_{i(l)}-1+d-k+1)\geq n-d\norm{w}_{i(l)}
	$$
	vertices remaining. This is a positive number by assumption~(iv) of the lemma. Hence the proof is complete.
\end{proof}

\begin{remark}\label{rmk:est_shrp}
	The condition on $\diam(w(S_n^r))$ is almost sharp: Take $g$ to be the $c$-cycle $(1\cdots c)$ and set $w=g^{x_1}$. Then $\diam(w(S_n^r))=\min\set{2c,n}$ and the bound in the lemma gives $\diam(w(S_n^r))\geq\floor{c/4}$.
\end{remark}

We set $$\norm{w}_\infty\coloneqq\max_{i\in\set{1,\ldots r}}{\norm{w}_i} \quad \mbox{and} \quad \norm{w}_{\rm crit}\coloneqq \min \left\{n, \min_{j \in J_-(w)}  \norm{c_j}\right\}.$$
The four assumptions in Lemma~\ref{lem:no_lw_lrg_supp_consts} can obviously by strengthened to the single inequality $\norm{w}_{\rm crit}\geq 2d \norm{w}_\infty$. It follows that 
$$
\diam(w(S_n^r))\geq\floor{\frac{\norm{w}_{\rm crit}}{2\norm{w}_\infty}}>\frac{\norm{w}_{\rm crit}}{2\norm{w}_\infty}-1
$$ 
for $n\geq2$.
Note that $\norm{w}_{\infty}$ is obviously bounded by the length $\ell(w)$. We will use the above inequality in the form 
$$
\norm{w}_{\rm crit} \leq 2\cdot(\diam(w(S_n^r))+1)\cdot\ell(w).
$$

We interpret this inequality as follows: If the diameter of the word image is small, then either there is a small critical constant or the word length is large.

\begin{remark}
    We suppose that a similar lemma can be proved for families like $\PSL_n(p)$ for a fixed prime $p$ and $n$ growing, leading to results comparable to our main theorem. However, we note that for example for the family $\PSL_2(q)$ for $q$ growing, the length of the shortest mixed identity grows linearly in $q$. Note that in this case, the fact that the length of the shortest mixed identity must tend to infinity is already a consequence of the results in \cite{gordeevkunyavskiiplotkin2016word}. This will be the subject of further work.
\end{remark}

\begin{proof}[Proof of Theorem~\ref{thm:main}]

    The bound in (ii) directly follows from the inequality $\norm{w}_{\rm crit}\leq 2(\diam(w(S_n^r))+1)\ell(w)$, since when $w$ is strong we have $\norm{w}_{\rm crit}=n$ as there is no critical constant. The assertion about existence of small critical constants is also an immediate consequence of the inequality above.

    In order to prove (i) we study the effect of removal of critical constants in more detail.
    Let $v$ be an {elementary reduction} of $w\in\freegrp_r\ast G$, i.e.\ it is obtained from $w$ by deleting a critical constant and reducing the outcome.
    Note that removing the smallest critical constant $c$ does not change the diameter of the word image by more than $2\norm{c}=2\norm{w}_{\rm crit}$, so that given a small diameter, we can try to iterate elementary reductions. Let $w$ have non-trivial content and let $w=w_0,\ldots,w_m$ be a chain of words, where $w_i$ is an elementary reduction of $w_{i-1}$ ($i\geq 1$) by a smallest critical constant, so that there is no such reduction for $w_m$, i.e.\ $w_m$ is strong. Note that $w_i$ no longer denotes the $i$th prefix of $w$. Then $\ell(w_i)\leq\ell(w_{i-1})-2\leq\ell(w)-2i$. We have
    $$
        \norm{w_{i-1}}_{\rm crit}\leq 2\ell(w_{i-1})(\diam(w_{i-1}(S_n^r))+1)
    $$
    and so
    \begin{eqnarray*}
        \diam(w_i(S_n^r)) &\leq&\diam(w_{i-1}(S_n^r))+2\norm{w_{i-1}}_{\rm crit}\\
        &\leq& \diam(w_{i-1}(S_n^r))+4\ell(w_{i-1})(\diam(w_{i-1}(S_n^r))+1).
    \end{eqnarray*}
    It follows that
    $$
        \diam(w_i(S_n^r))+1\leq(1+4\ell(w_{i-1}))(\diam(w_{i-1}(S_n^r))+1).
    $$
    Hence, we obtain
    $$
        \diam(w_i(S_n^r))+1\leq (1+4\ell(w))^i(\diam(w(S_n^r))+1)
    $$
    by induction on $i=0,\ldots,m$, since $\ell(w_i)\leq\ell(w)$. By assumption, $w_m$ is not a constant from $G=S_n$. Then by the above inequality
    \begin{align*}
        n=\norm{w_m}_{\rm crit}&\leq 2(\diam(w_m(S_n^r))+1)\ell(w_m)\\
        &\leq2(1+4\ell(w))^m(\diam(w(S_n^r))+1)\ell(w)\\
        &\leq2(1+4\ell(w))^{\floor{\ell(w)/2}}\ell(w)(\diam(w(S_n^r))+1) \\
        &\leq 2\exp(\log(5\ell(w)) \ell(w)/2)  (\diam(w(S_n^r))+1)
    \end{align*}
    as $m\leq\ell(w)/2$ and $\ell(w)\geq 1$. This is the bound in (i) of Theorem~\ref{thm:main}. 
\end{proof}

We now turn to metric ultraproducts of symmetric groups and Corollary \ref{cor:ultra}. Assume that $l\geq 1$ and let $$w_n=x_{i(1)}^{\varepsilon_1}c_{1,n}\cdots x_{i(l)}^{\varepsilon_l}c_{l,n}\in\freegrp_r\ast S_n$$ be a sequence of reduced words of the same structure, but with different constants $c_{j,n}$ ($j=1,\ldots,l$, $n\in\nats$, $n\geq 2$). Let $\calU$ be an ultrafilter on $\nats$ such that $\diam(w_n(S_n^r))/n\to_\calU 0$ as $n\to_\calU\infty$, i.e.\ $$w=x_{i(1)}^{\varepsilon(1)}\overline{(c_{1,n})}_n\cdots x_{i(l)}^{\varepsilon(l)}\overline{(c_{l,n})}_n$$ induces a constant map on the metric ultraproduct of the $S_n$'s equipped with the normalized Hamming norm $\norm{g}_{\rm H}=\norm{g}/n$. We claim, that then there is an index $j\in\set{1,\ldots,l-1}$ such that $i(j)=i(j+1)$ and $\varepsilon(j)=-\varepsilon(j+1)$, such that $\norm{c_{j,n}}/n\to_\calU 0$ as $n\to_\calU\infty$. In particular, $w$ is trivial.

\begin{proof}[Proof of Corollary \ref{cor:ultra}]
	By the above $2\norm{w}_\infty(\diam(w_n(S_n^r))+1)\geq\norm{w}_{\mathrm c}$, so that $\norm{w}_{\rm c}/n\to_\calU 0$ as $n\to_\calU\infty$ by the assumption. Hence, by finiteness, there exists an index $j\in J_-(w)$ such that $\norm{c_{j,n}}/n\to_\calU 0$ as $n\to_\calU\infty$. 
	In particular, starting with a potential mixed identity in the metric ultraproduct, the previous argument implies that a critical constant must be trivial. This is a contradiction and the proof is complete.
\end{proof}



\section*{Appendix}

It is well-known that every map $\finfield_q\to\finfield_q$ is realized by a polynomial from $\finfield_q[X]$ of degree less than $q$ -- in fact this is a characterization of fields among finite rings, see  \cite{istingerkaiser1979characterization}. We prove the analogous result for non-abelian simple groups. We denote by the \emph{covering number} $\cn(G)$ of $G$ the minimal $m$, such that $C^{\ast m}=G$ for all non-trivial conjugacy classes $C$. Various bounds on $\cn(G)$ for finite simple groups $G$ can be found in the literature, see for example the seminal work of Liebeck-Shalev~\cite{liebeckshalev2001diameters} and the references therein.

\begin{theorem} \label{gensimp}
	A non-abelian finite group is simple if and only if every map $G\to G$ is a word map with constants. Every such map can be represented by a word of length at most $O(\card{G}^3 \cn(G))$.
\end{theorem}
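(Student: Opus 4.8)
The plan is to prove both implications. For the easy direction, suppose $G$ is non-abelian but not simple, with $1 \neq N \trianglelefteq G$ a proper normal subgroup. Then for any word with constants $w \in \freegrp_r \ast G$, reduction modulo $N$ shows that the induced map $G^r \to G$ descends to a map $(G/N)^r \to G/N$ composed with... more precisely, $w(g_1,\dots,g_r) \bmod N$ depends only on the $g_i \bmod N$, so the composite $G^r \xrightarrow{w} G \to G/N$ factors through $(G/N)^r$. Picking two elements $g, g'$ of $G$ with $g \equiv g' \pmod N$ but $g \neq g'$, no word map can separate a self-map taking different values that are congruent mod $N$ in a way incompatible with this factorization; concretely, the self-map sending everything to $1_G$ except one element to a fixed $n \in N \setminus\{1\}$ lying in the kernel direction cannot be realized. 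I would phrase this cleanly: the set of realizable self-maps is closed under composition and contains all constants and the identity, hence forms a monoid, and every element of it is compatible with every quotient $G \to G/N$; a transposition-type map violating this compatibility is therefore not realizable.

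For the hard direction, assume $G$ is non-abelian finite simple. The strategy is: (1) realize the indicator-type maps, i.e.\ for each $a \in G$ construct a word with constants $w_a$ whose associated map sends $a \mapsto$ (something nontrivial) and $b \mapsto 1_G$ for all $b \neq a$; (2) combine these to build an arbitrary map. For step (1), the key tool is the covering number: fix a generator-like element and use that for any $g \neq 1$, the conjugacy class $C_g$ satisfies $C_g^{\ast \cn(G)} = G$. To detect "$x = a$" one wants a word in $x$ and constants that is trivial exactly when $x = a$; equivalently, using $x^{-1}a$, a word that is trivial exactly at the identity. A natural device: for a fixed non-central $t$, the element $[x^{-1}a, t] = (x^{-1}a)^{-1} t^{-1} (x^{-1}a) t$ is trivial iff $x^{-1}a$ centralizes $t$; ranging over a generating set of such $t$'s (finitely many, say indexed by a set whose intersection of centralizers is trivial since $Z(G) = 1$) and multiplying the resulting "activated" conjugacy-class elements lets one produce, via the covering number, a prescribed value $f(a)$ when $x = a$ while everything stays trivial when $x \neq a$ for at least one coordinate — here one must be careful and instead take a product over enough conjugating constants so that $x \neq a$ forces at least one commutator nontrivial, then raise to a power or multiply to exploit $C^{\ast \cn(G)} = G$ to hit exactly $f(a)$, and arrange the trivial-when-$x=a$ factors to collapse. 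Then $w = \prod_{a \in G} w_a$ (suitably ordered) realizes $f$, since for input $b$ all factors $w_a$ with $a \neq b$ evaluate to $1_G$ and $w_b$ evaluates to $f(b)$.

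The main obstacle I anticipate is step (1): building a word with constants that equals $1_G$ unless $x$ equals a prescribed $a$, and then equals a prescribed target $f(a)$ — doing this with a single variable $x$ and with controlled length. The delicate point is that "$x^{-1}a$ acts trivially on all of a fixed finite test set of elements" must be equivalent to "$x = a$"; this uses only $Z(G) = \{1\}$, but packaging the $O(|G|)$ commutator tests and then applying covering-number surjectivity to produce an arbitrary value $f(a)$ costs roughly $|G|$ test elements times $\cn(G)$ conjugates per value times $|G|$ choices of $a$, giving the stated $O(|G|^3 \cn(G))$ bound on the length. I would track the length carefully here: each $w_a$ has length $O(|G| \cdot \cn(G))$ (a bounded-length commutator for each of $O(|G|)$ test elements, each "switched on" via $\cn(G)$ conjugators), and there are $|G|$ values of $a$, so $\ell(w) = O(|G|^2 \cn(G))$; the extra factor of $|G|$ comes from needing, inside each $w_a$, to reach an arbitrary element of $G$ as a product of conjugates, costing another $O(|G|)$ — one should double-check whether $O(|G|^2\cn(G))$ or $O(|G|^3\cn(G))$ is what the construction actually yields and state the weaker bound if in doubt.
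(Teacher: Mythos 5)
Your overall skeleton (build delta-type maps $\delta_{a,f(a)}$ and multiply them over $a\in G$, using the covering number to reach the prescribed value) matches the paper, but both directions have genuine gaps. In the easy direction, the map you exhibit --- everything to $1_G$ except one element to some $n\in N\setminus\{1_G\}$ --- takes \emph{all} its values inside $N$, so it is perfectly compatible with the factorization through $G/N$ and your argument does not show it is unrealizable. The correct witness must take values in different cosets of $N$ on inputs that are congruent mod $N$: for instance send some $g\in N\setminus\{1_G\}$ to an element $h\notin N$ while $1_G\mapsto 1_G$; then $g\equiv 1_G \pmod N$ but the images differ mod $N$. (The paper phrases this via normal closures: since $w(1_G)=1_G$ forces $w(g)\in\gennorsubgrp{g}$, realizability of all delta maps forces $\gennorsubgrp{g}=G$ for every $g\neq 1_G$.)

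In the hard direction, the central difficulty --- producing, for each $a$, a word that is trivial on all of $G\setminus\{a\}$ and nontrivial at $a$ --- is not resolved by your device. The commutators $[x^{-1}a,t]$ are trivial at $x=a$, and any product or further commutator of (conjugates of) such expressions is again trivial at $x=a$; hence nothing built from them can output the nontrivial value $f(a)$ precisely at $x=a$, and the hedge about ``arranging the trivial-when-$x=a$ factors to collapse'' does not repair this reversed polarity. The paper's construction uses blocks with the opposite polarity and a divide-and-conquer commutator trick: the base block $xs^{-1}$ is trivial exactly at $s$, and for $S=S_1\cup S_2$ one sets $w_{g,S}=[w_{g,S_1}^{a},w_{g,S_2}^{b}]$, which is automatically trivial on $S_1\cup S_2$ (a commutator dies as soon as one entry does), while simplicity supplies conjugators $a,b$ keeping the value at $g$ nontrivial, because the conjugacy class of $w_{g,S_2}(g)\neq 1_G$ generates $G$ and so cannot be contained in the proper subgroup $\C_G(w_{g,S_1}(g))$. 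This yields $w_{g,G\setminus\{g\}}$ of length $4^{\ceil{\log_2(\card{G}-1)}}=O(\card{G}^2)$; multiplying $\cn(G)$ conjugates of it gives $\delta_{g,h}$, and the product over $g\in G$ gives the stated $O(\card{G}^3\cn(G))$ bound, which also settles the length bookkeeping you left unresolved.
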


\begin{remark}
    An abelian group $G$ has the above property (i.e.\ each map $G\to G$ comes from a word with constants) if and only if $\card{G}\leq 2$.
\end{remark}

\begin{remark}
    In the theorem, the term $\cn(G)$ can also be replaced by the \emph{covering diameter} $\cd(G)$ of $G$, which is the minimal number $m$ such that $(\set{1_G}\cup C\cup C^{-1})^{\ast m}=G$ for all non-trivial conjugacy classes $C$ of $G$. Clearly, $\cd(G)\leq\cn(G)$.
\end{remark}

\begin{proof}
	The evaluation map $\ev\colon\freegrp_1\ast G=\gensubgrp{x}\ast G\to G^G$ is a homomorpism. Also, if $\pi_{\set{g,h}}\colon G^G\to G^{\set{g,h}}$ is the projection onto $g$ and $h$, then $\pi_{\set{g,h}}\circ\ev$ is surjective: Indeed, by Goursat's lemma and since $G$ is assumed to be simple, it is enough to show that its image is not the graph of an automorphism. However, this cannot be the case, since the constants $G$ map to the diagonal subgroup, and $x$ maps to a non-diagonal subgroup. This implies that $\ev$ must be surjective too.
	
	By using iterated commutators, one can give a more explicit construction: We prove that for any subset $S\subseteq G$ of size $m\leq 2^e$ and an element $g\in G\setminus S$, there is a commutator word $w_{g,S}\in\freegrp_1\ast G$ of length $4^e$ such that the induced map $w_{g,S}\colon G\to G$ satisfies $w_{g,S}(s)=1_G$ for all $s\in S$ and $w_{g,S}(g)\neq 1_G$.
	
	Indeed, this is true when $e=0$ and $S=\set{s}$: Simply take $w_{g,\set{s}}=xs^{-1}$. For $e\geq 1$, write $S=S_1\cup S_2$ as a union of the subsets $S_1$ and $S_2$ such that $\card{S_1},\card{S_2}\leq 2^{e-1}$. Then set $w_{g,S}\coloneqq[w_{g,S_1}^a,w_{g,S_2}^b]$ where $a,b\in G$ are chosen such that $[w_{g,S_1}(g)^a,w_{g,S_2}(g)^b]\neq 1_G$. This is possible since $(w_{g,S_2}(g))^G$ generates the whole $G$ by simplicity as $w_{g,S_2}(g)\neq 1_G$, whereas $\C_G(w_{g,S_1}(g))$ is a proper subgroup. Hence $$w_{g,S_2}^G(g)\not\subseteq\C_G(w_{g,S_1}(g)).$$ We have that $\ell(w_{g,S})\leq 2(\ell(w_{g,S_1})+\ell(w_{g,S_1}))\leq 4^{e}$.
	
	Then setting $e\coloneqq\ceil{\log_2(\card{G}-1)}$, one can take $S=G\setminus\set{g}$, so that $w_{g,S}(s)=1_G$ for $s\in S$ and $w_{g,S}(g)\neq 1_G$ as desired. Now we can multiply together conjugates of the map $w_{g,S}$ to get a map $\delta_{g,h}\in\freegrp_1\ast G$ such that $\delta_{g,h}(s)=1_G$ for $s\in S=G\setminus\set{g}$ and $\delta_{g,h}(g)=h$ for a chosen $h\in G$. Then $w=\prod_{g\in G}{\delta_{g,f(g)}}=f$ as maps $w,f\colon G\to G$ where $f$ is arbitrary. So $f$ is a word map. Here $$\ell(w)\leq \card{G} 4^{\ceil{\log_2(\card{G}-1)}}\cn(G)\leq 4 \card{G}(\card{G}-1)^2\cn(G)<4\card{G}^3\cn(G).$$
	
	Conversely, if any map $f\colon G\to G$ is a word map, then for $g\neq 1_G$, the map $\delta_{g,h}\colon G\to G$ given by 
	$$
	\delta_{g,h}(x)=\begin{cases}
		1_G & \text{if $x\neq g$}\\
		h & \text{if $x=g$}
	\end{cases}
	$$
	is induced by some word $w\in\freegrp_1\ast G$. Since $w(1_G)=\delta_{g,h}(1_G)=1_G$, we have that $w(g)=\delta_{g,h}(g)=h\in\gennorsubgrp{g}$. But since $h$ was arbitrary, we have $\gennorsubgrp{g}=G$, so that $G$ is simple, since $g\neq 1_G$ was arbitrary as well.
\end{proof}

By \cite{aradstaviherzog1985powers}, Chapter~3, we have $\cn(A_n) = \floor{n/2}$ for $n\geq6$. Thus, we get the following estimate for alternating groups.

\begin{corollary}
    For $n\geq 5$, every map $A_n \to A_n$ can be represented by a word map with constants of length $O((n!)^3 n)$.
\end{corollary}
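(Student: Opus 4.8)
The plan is to obtain this as an immediate specialization of Theorem~\ref{gensimp}. First I would note that for $n \geq 5$ the alternating group $A_n$ is non-abelian and simple, so Theorem~\ref{gensimp} applies verbatim and tells us that every map $A_n \to A_n$ is a word map with constants represented by a word of length at most $O(\card{A_n}^3\,\cn(A_n))$ (in fact the proof of that theorem gives the explicit bound $4\card{A_n}^3\cn(A_n)$, which is more than enough).

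Next I would substitute the two relevant quantities. Since $\card{A_n} = n!/2$, we get $\card{A_n}^3 = (n!)^3/8 = O((n!)^3)$. For the covering number, the Arad--Stavi--Herzog value quoted just above the corollary gives $\cn(A_n) = \floor{n/2} \leq n/2$ for all $n \geq 6$, hence $\cn(A_n) = O(n)$ in that range. For the single leftover value $n = 5$ one only needs that $\cn(A_5)$ is finite: for any finite non-abelian simple group $G$ and any non-trivial conjugacy class $C$, the ascending union $\bigcup_{m\geq 1} C^{\ast m}$ is a non-trivial normal subgroup and therefore equals $G$, so by finiteness it stabilizes at some finite $m$; thus $\cn(A_5)$ contributes only a bounded factor, which is absorbed into the implied constant.

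Putting the pieces together, the length bound reads $O((n!)^3\cdot n)$ uniformly for $n \geq 5$, which is precisely the claimed estimate.

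I do not expect any genuine obstacle here, since the corollary is a direct consequence of Theorem~\ref{gensimp}. The only small point deserving explicit mention is that the Arad--Stavi--Herzog formula for $\cn(A_n)$ is stated only for $n \geq 6$, so the case $n = 5$ must be disposed of separately by the finiteness observation above (or, if one prefers a uniform citation, by invoking the Liebeck--Shalev bounds on covering numbers of finite simple groups).
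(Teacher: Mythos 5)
Your proposal is correct and matches the paper's (implicit) argument: the corollary is stated as a direct specialization of Theorem~\ref{gensimp} using $\card{A_n}=n!/2$ and the Arad--Stavi--Herzog value $\cn(A_n)=\floor{n/2}$. Your extra remark handling $n=5$ separately is a sensible tidying-up of a detail the paper leaves implicit, but it does not change the route.
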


\begin{remark}
    There are at most $3^l\card{G}^{l+1}$ words with constants of length at most $l$ in $\freegrp_1\ast G=\gensubgrp{x}\ast G$. So in order that all self-maps $G\to G$ can be represented as word maps coming from words of length at most $l$, we must have 
    $$
    \card{G}^{\card{G}}\leq3^l\card{G}^{l+1}.
    $$
    This implies a linear lower bound on the minimal length of words with constants that can represent all possible maps.
\end{remark}



	

\begin{remark}
    It was pointed out to us by Ben Steinberg and Anton Klyachko, that the qualitative aspect of Theorem~\ref{gensimp} is a well-known result \cite{maurerrhodes1965property}. Similar but weaker bounds have been obtained in \cite{horvathnehaniv2015length}. Using methods similar to the ones in the proof of Theorem~\ref{gensimp} one can show that every map $G^r \to G$ arises as a word map with constants of length $O(\card{G}^{r+2} r^2 \cn(G))$. Indeed, one can use the maps of length at most $O(\card{G}^2)$ to get maps that see just one coordinate of $G^r$. Taking $\ceil{\log_2(r)}$ commutators of those maps, one gets a map that sees just one element of $G^r$. This then has length $O(r^2\card{G}^2)$. Hence, we get a bound 
    $$
        O(\card{G}^{r+2} r^2\cn(G)).
    $$
    This is exactly the bound in \cite{horvathnehaniv2015length} for $A_n$, but the authors do not write it out for other non-abelian simple groups.
\end{remark}

\section*{Acknowledgments}

We thank Robert Kaak for his friendship. Moreover, we thank Christoph Schulze and Vadim Alekseev for discussions on the topic. We thank Ben Steinberg and Anton Klyachko for comments on a first draft of this paper. This research was supported by the ERC Consolidator Grant No.\ 681207.

\begin{bibdiv}
	\begin{biblist}
	    \bib{aradstaviherzog1985powers}{article}{
            author={Arad, Zvi},
            author={Stavi, Jonathan},
            author={Herzog, Marcel},
            title={Powers and products of conjugacy classes in groups},
            conference={
                title={Products of conjugacy classes in groups},
            },
            book={
                series={Lecture Notes in Mathematics},
                volume={1112},
                publisher={Springer, Berlin},
            },
            date={1985},
            pages={6--51},
        }
		\bib{avnigelanderkassabovshalev2013word}{article}{
			title={Word values in $p$-adic and adelic groups},
			author={Avni, Nir},
			author={Gelander, Tsachik},
			author={Kassabov, Martin},
			author={Shalev, Aner},
			journal={Bulletin of the London Mathematical Society},
			volume={45},
			number={6},
			pages={1323--1330},
			year={2013},
			publisher={Wiley Online Library}
		}
		\bib{bandmangariongrunewald2012surjectivity}{article}{
			author={Bandman, Tatiana},
			author={Garion, Shelly},
			author={Grunewald, Fritz},
			title={On the surjectivity of Engel words on ${\rm PSL}(2,q)$},
			journal={Groups, Geometry, and Dynamics},
			volume={6},
			date={2012},
			number={3},
			pages={409--439}
		}
		\bib{bradford2019short}{article}{
		    title={Short laws for finite groups and residual finiteness growth},
		    author={Bradford, Henry},
		    author={Thom, Andreas},
		    journal={Transactions of the American Mathematical Society},
		    volume={371},
		    number={9},
		    pages={6447--6462},
		    year={2019}
        }
        \bib{elekszabo2003sofic}{article}{
            author={Elek, G\'{a}bor},
            author={Szab\'{o}, Endre},
            title={Sofic groups and direct finiteness},
            journal={Journal of Algebra},
            volume={280},
            date={2004},
            number={2},
            pages={426--434},
        }
		\bib{elkasapythom2014goto}{article}{
			title={About Got{\^o}'s method showing surjectivity of word maps},
			author={Elkasapy, Abdelrhman},
			author={Thom, Andreas},
			journal={Indiana University Mathematics Journal},
			volume={63},
			date={2014},
			number={5},
			pages={1553--1565}
		}
		\bib{garionshalev2009commutator}{article}{
			title={Commutator maps, measure preservation, and $T$-systems},
			author={Garion, Shelly},
			author={Shalev, Aner},
			journal={Transactions of the American Mathematical Society},
			volume={361},
			number={9},
			pages={4631--4651},
			year={2009}
		}
		\bib{gordeev1997freedom}{article}{
            title={Freedom in conjugacy classes of simple algebraic groups and identities with constants},
            author={Gordeev, Nikolai Leonidovich},
            journal={Algebra i Analiz},
            volume={9},
            number={4},
            pages={63--78},
            year={1997}
        }
		\bib{gordeevkunyavskiiplotkin2016word}{article}{
			title={Word maps and word maps with constants of simple algebraic groups},
			author={Gordeev, Nikolai},
			author={Kunyavski{\u\i}, Boris},
			author={Plotkin, Eugene},
			journal={Doklady Mathematics},
			volume={94},
			number={3},
			pages={632--634},
			year={2016},
			organization={Springer Science \& Business Media}
		}
		\bib{gordeevkunyavskiiplotkin2018word}{article}{
			title={Word maps on perfect algebraic groups},
			author={Gordeev, Nikolai},
			author={Kunyavski{\u\i}, Boris},
			author={Plotkin, Eugene},
			journal={International Journal of Algebra and Computation},
			volume={28},
			number={8},
			pages={1487--1515},
			year={2018},
			publisher={World Scientific}
		}
		\bib{guralnickliebeckobrienshalevtiep2018surjective}{article}{
			title={Surjective word maps and {B}urnside's $p^a q^b$ theorem},
			author={Guralnick, Robert},
			author={Liebeck, Martin}, 
			author={O'Brien, Eamonn},
			author={Shalev, Aner},
			author={Tiep, Pham Huu},
			journal={Inventiones mathematicae},
			volume={213},
			number={2},
			pages={589--695},
			year={2018},
			publisher={Springer Science \& Business Media}
		}
		\bib{guralnicktiep2015effective}{article}{
			title={Effective results on the Waring problem for finite simple groups},
			author={Guralnick, Robert},
			author={Tiep, Pham Huu},
			journal={American Journal of Mathematics},
			volume={137},
			number={5},
			pages={1401--1430},
			year={2015},
			publisher={The Johns Hopkins University Press}
		}
		\bib{horvathnehaniv2015length}{article}{
            author={Horv\'{a}th, G\'{a}bor},
            author={Nehaniv, Chrystopher L.},
            title={Length of polynomials over finite groups},
            journal={Journal of Computer and System Sciences},
            volume={81},
            date={2015},
            number={8},
            pages={1614--1622},
            publisher={Elsevier}
        }
		\bib{huilarsenshalev2015waring}{article}{
			title={The Waring problem for Lie groups and Chevalley groups},
			author={Hui, Chun Yin},
			author={Larsen, Michael},
			author={Shalev, Aner},
			journal={Israel Journal of Mathematics},
			volume={210},
			number={1},
			pages={81--100},
			year={2015},
			publisher={Springer Science \& Business Media}
		}
        \bib{istingerkaiser1979characterization}{article}{
            title={A characterization of polynomially complete algebras},
            author={Istinger, M.}, 
            author={Kaiser, Hans K.},
            journal={Journal of Algebra},
            volume={56},
            number={1},
            pages={103--110},
            year={1979},
            publisher={Academic Press}
        }
		\bib{klyachkothom2017new}{article}{
			title={New topological methods to solve equations over groups},
			author={Klyachko, Anton},
			author={Thom, Andreas},
			journal={Algebraic \& Geometric Topology},
			volume={17},
			number={1},
			pages={331--353},
			year={2017},
			publisher={Mathematical Sciences Publishers}
		}
		\bib{kozmathom2016divisibility}{article}{
            author={Kozma, Gady},
            author={Thom, Andreas},
            title={Divisibility and laws in finite simple groups},
            journal={Mathematische Annalen},
            volume={364},
            date={2016},
            number={1-2},
            pages={79--95},
        }
		\bib{larsen2004word}{article}{
			title={Word maps have large image},
			author={Larsen, Michael},
			journal={Israel Journal of Mathematics},
			volume={139},
			number={1},
			pages={149--156},
			year={2004},
			publisher={Springer Science \& Business Media}
		}
		\bib{larsenshalev2016distribution}{article}{
			title={On the distribution of values of certain word maps},
			author={Larsen, Michael},
			author={Shalev, Aner},
			journal={Transactions of the American Mathematical Society},
			volume={368},
			number={3},
			pages={1647--1661},
			year={2016}
		}
		\bib{larsenshalev2009word}{article}{
			title={Word maps and Waring type problems},
			author={Larsen, Michael},
			author={Shalev, Aner},
			journal={Journal of the American Mathematical Society},
			volume={22},
			number={2},
			pages={437--466},
			year={2009}
		}
		\bib{larsenshalev2017words}{article}{
			title={Words, Hausdorff dimension and randomly free groups},
			author={Larsen, Michael},
			author={Shalev, Aner},
			journal={Mathematische Annalen},
			pages={1--19},
			year={2017},
			publisher={Springer Science \& Business Media}
		}
		\bib{larsenshalevtiep2012waring}{article}{
			title={Waring problem for finite quasisimple groups},
			author={Larsen, Michael},
			author={Shalev, Aner},
			author={Tiep, Pham Huu},
			journal={International Mathematics Research Notices},
			volume={2013},
			number={10},
			pages={2323--2348},
			year={2012},
			publisher={Oxford University Press}
		}
        \bib{liebeckshalev2001diameters}{article}{
            title={Diameters of finite simple groups: sharp bounds and applications},
            author={Liebeck, Martin W.},
            author={Shalev, Aner},
            journal={Annals of mathematics},
            pages={383--406},
            year={2001},
            publisher={JSTOR}
        }
		\bib{lubotzky2014images}{article}{
			title={Images of word maps in finite simple groups},
			author={Lubotzky, Alexander},
			journal={Glasgow Mathematical Journal},
			volume={56},
			number={2},
			pages={465--469},
			year={2014},
			publisher={Cambridge University Press}
		}
		\bib{maurerrhodes1965property}{article}{
            author={Maurer, Ward D.},
            author={Rhodes, John L.},
            title={A property of finite simple non-abelian groups},
            journal={Proceedings of the American Mathematical Society},
            volume={16},
            date={1965},
            pages={552--554},
        }
		\bib{nikolovschneiderthom2018some}{article}{
            author={Nikolov, Nikolay},
            author={Schneider, Jakob},
            author={Thom, Andreas},
            title={Some remarks on finitarily approximable groups},
            language={English, with English and French summaries},
            journal={Journal de l’{\'E}cole polytechnique—Math{\'e}matiques},
            volume={5},
            date={2018},
            pages={239--258},
        }
		\bib{nitschethom2022universal}{article}{
            title={Universal solvability of group equations},
            author={Nitsche, Martin},
            author={Thom, Andreas},
            journal={Journal of Group Theory},
            volume={25},
            number={1},
            pages={1--10},
            year={2022},
            publisher={De Gruyter}
        }
		\bib{pestov2008hyperlinear}{article}{
            title={Hyperlinear and sofic groups: a brief guide},
            author={Pestov, Vladimir},
            journal={Bulletin of Symbolic Logic},
            volume={14},
            number={4},
            pages={449--480},
            year={2008},
            publisher={Cambridge University Press}
        }
		\bib{schneider2019phd}{thesis}{
			author={Schneider, Jakob},
			title={On ultraproducts of compact quasisimple groups},
			type={PhD thesis},
			school={TU Dresden},
			year={2019},
			status={appeared on \url{http://www.qucosa.de}}
		}
		\bib{thom2017length}{article}{
            title={About the length of laws for finite groups},
            author={Thom, Andreas},
            journal={Israel Journal of Mathematics},
            volume={219},
            number={1},
            pages={469--478},
            year={2017},
            publisher={Springer}
            }
        \bib{thom2013convergent}{article}{
            title={Convergent sequences in discrete groups},
            author={Thom, Andreas},
            journal={Canadian Mathematical Bulletin},
            volume={56},
            number={2},
            pages={424--433},
            year={2013},
            publisher={Cambridge University Press}
        }
        \bib{tomanov1985generalized}{article}{
            title={Generalized group identities in linear groups},
            author={Tomanov, George M.},
            journal={Mathematics of the USSR-Sbornik},
            volume={51},
            number={1},
            pages={33},
            year={1985},
            publisher={IOP Publishing}
        }
	\end{biblist}
\end{bibdiv}
\end{document}